\algrenewcommand\Return{\State \algorithmicreturn{} } 
\newtheorem{theorem4}{Theorem}
\newtheorem{lemma}[theorem4]{Lemma}
\begin{document}

\title{Age-of-Updates Optimization for UAV-assisted Networks}

 %\author{\authorblockN{Mouhamed Naby Ndiaye\authorrefmark{1}, Hajar El Hammouti\authorrefmark{1}, ElHoucine Bergou\authorrefmark{1} }
 %\authorblockA{\authorrefmark{1}School of Computer Science, Mohammed VI Polytechnic University (UM6P), Benguerir, Morocco}}
 
\author[1]{Mouhamed Naby Ndiaye}
\author[1]{El Houcine Bergou}
\author[2]{Mounir Ghogho}
\author[1]{Hajar El Hammouti}
\affil[1]{School of Computer Science, Mohammed VI Polytechnic University (UM6P), Benguerir, Morocco, \authorcr
emails: {\{naby.ndiaye,hajar.elhammouti,elhoucine.bergou\}@um6p.ma}}
\affil[2]{TICLab, International University of Rabat (UIR), Rabat, Morocco,\authorcr
email: {mounir.ghogho@uir.ac.ma}}

%  El Houcine Bergou\authorrefmark{1}, Mounir Ghogho \authorrefmark{2}, Hajar El Hammouti\authorrefmark{1} }
%  \authorblockA{\authorrefmark{1}School of Computer Science, Mohammed VI Polytechnic University (UM6P), Benguerir, Morocco,\\\authorrefmark{2}TICLab, International University of Rabat (UIR), Rabat, Morocco,}
% emails: \{naby.ndiaye,hajar.elhammouti,elhoucine.bergou\}@um6p.ma, mounir.ghogho@uir.ac.ma}

\maketitle

% ===================
% # I. Abstract #
% ===================
\begin{abstract}
%Unmanned aerial vehicles (UAVs), have been presented as a promising technology for collecting data from IoT devices and relaying it to the network. However, it would be very difficult for a single drone with a limited amount of energy to guarantee the freshness of information from Internet of Thing (IoT) devices while accomplishing the data collection task. In this work, we aim to minimize the average age value of the information collected by the UAVs. Using UAVs as data collectors, we investigate the topic of minimizing the Age-of-Update (AoU) at the macro Base Station (BS). To that aim, we define our system's AoU, which is a metric that reflects the staleness of each update. Then, we formulate our problem mathematically in order to minimize the total expected AoU in our system. However, due to the non-convexity of certain constraints, the formulated optimization problem is difficult to solve. As a result, we propose a new reformulation that will allow us to overcome these challenges. We also propose an iterative approach to efficiently solve the studied problem. The simulation results show that the proposed solution achieves good results.

Unmanned aerial vehicles (UAVs) have been proposed as a promising technology to collect data from IoT devices and relay it to the network. In this work, we are interested in scenarios where the data is updated periodically, and the collected updates are time-sensitive. In particular, the data updates may lose their value if they are not collected and analyzed timely. To maximize the data freshness, we optimize a new performance metric, namely the Age-of-Updates (AoU). Our objective is to carefully schedule the UAVs hovering positions and the users' association so that the AoU is minimized. Unlike existing works where the association parameters are considered as binary variables, we assume that devices send their updates according to a probability distribution. As a consequence, instead of optimizing a deterministic
objective function, the objective function is replaced by an
expectation over the probability distribution. The expected AoU is therefore optimized under quality of service and energy constraints. The original problem being non-convex, we propose an equivalent convex optimization that we solve using an interior-point method. Our simulation results show the performance of the proposed approach against a binary association.

%However, it is very difficult for a single UAV with a limited amount of energy to ensure the freshness of information from Internet of Thing (IoT) devices while performing the data collection. To tackle this issue, most existing studies suggest using multiple UAVs as data collectors and deterministically linking them to IoT devices by solving a mixed integer non-linear programming, leading to a hard  optimization problem that is usually solved by using heuristics and therefore without any guarantee on the quality of the final solution and the time to find it. In this paper, we propose a new approach that connects IoT devices to UAVs in a probabilistic way resulting in a continuous optimization problem and therefore with more guarantees on the quality of the final solution. We propose to find the link probabilities by minimizing the expected Age-of-Update (AoU) at the macro base station (BS) under the
%the constraints on the expected consumed energy  and the transmission rate to guarantee a required quality of service. We also propose a new reformulation that allows us to convexify our underlying optimization problem and thus make it more easily traceable numerically. The simulation results show that the proposed approach achieves good results.

\end{abstract}
\IEEEoverridecommandlockouts
\begin{IEEEkeywords}
3D placements of UAVs, Age-of-Updates, Convex optimization, Users' association.
\end{IEEEkeywords}

\IEEEpeerreviewmaketitle

% ===================
% # I. Introduction #
% ===================

\section{Introduction}
In the last few years, unmanned aerial vehicles (UAVs) have been considered for a variety of applications in the telecommunications industry~\cite{FAAFiscal}. In particular, UAVs are used as base stations to extend the network coverage in dense and out-of-reach areas~\cite{10.1007/978-3-030-02849-7_33}. They are also deployed as data collectors to relay information from Internet of Things (IoT) devices to sink nodes~\cite{alzenad20173}. In this work, we address the problem of UAVs acting as data collectors. We are interested in the context where the collected data is updated periodically and the data updates are time-sensitive~\cite{wang2020priority}. In particular, the data updates may lose their meaning and value if they are not collected and analyzed timely. As a consequence, it is important to carefully design the UAV's flight and stopping points so that the collected updates are kept as fresh as possible. For this purpose, a new performance metric, namely Age-of-Updates (AoU), has been introduced. This metric captures the time since the last update was collected. In this paper, we answer the question: How to schedule the hovering locations of the drones and determine the selected devices, from which the data is collected, so that the total AoU is minimized? 

%To minimize the AoU, it is crucial to schedule the hovering locations of the drone and determine the selected devices, from which the data is collected, during the drone's flight. time while respecting energy and quality of service constraints. 

\subsection{Related Work}
The majority of UAV-enabled networks research works focus on optimizing metrics such as the sum-rate and energy efficiency~\cite{zeng2017energy,Zeng2019,zhan2019completion,licea2019communication,liu2018energy,ranjha2021urllc}. For example, in~\cite{licea2019communication}, the authors design the trajectory of a UAV to maximize the rate between a drone and a ground base station. The authors consider the limited energy of the drone, and the dynamic nature of the communication channel. They decompose the problem into a sequence of control optimizations that are solved using control theory. Another example can be found in~\cite{liu2018energy} where the authors propose an energy-efficient drone control policy to ensure efficient and fair communication coverage for IoT devices using deep reinforcement learning.
 
However, only a handful of papers address the problem of data timeliness in UAV-enabled networks. In~\cite{jia2019age}, the authors optimize a similar concept, called Age-of-Information (AoI). AoI is defined as the time since the last relevant information arrived at its destination. The objective of the work in~\cite{jia2019age} is to optimize the data collection mode and the UAV trajectory to reduce the average AoI of IoT devices. In the same context, in~\cite{tong2019uav}, the authors propose a UAV trajectory planning to minimize the maximum AoI of a UAV-enabled wireless sensor network. Another work in~\cite{abd2018average} consists of optimizing the service time allocation and the flight of a UAV used as a mobile relay. The previously cited works are limited to a single drone application and optimize only the 2D location of the drone, ignoring the impact of its altitude. 

The AoU has been introduced recently in the context of federated learning (FL)~\cite{yang2020age}. An age measure of this type has been shown to improve the notion of data freshness in a variety of applications and has already been used, primarily in the context of networking (see, for example, the study in~\cite{yates2021age}). In~\cite{yang2020age}, the authors propose a device scheduling policy to maximize the parameters' updates freshness, and therefore, improve the accuracy of the FL model. The studied problem is combinatorial. It involves mixed-integer and continuous variables and it is solved using a decomposition method that addresses the channel allocation and devices association subproblems separately. 

In fact, UAV networks related optimizations are often formulated as mixed-integer non-linear programming, where the scheduling/association parameters are considered as binary variables, whereas the positions of UAVs are modelled as continuous variables. As a result, the optimization problems are non-convex, most of the time NP-hard, and therefore, difficult to solve. Moreover, the proposed algorithms are heuristics with no performance guarantee and, in general, require a long time to converge.

In our paper, we deal with the problem of AoU optimization in UAV-assisted networks differently. We claim that, the binary association constraint can be relaxed. The association variables can be seen as probabilities. In particular, a user decides whether to connect to a UAV or not based on a probability distribution. As a consequence, instead of optimizing a deterministic objective function, the objective function is replaced by an expectation over the probability distribution. This approach has two main advantages. First, it allows to formulate the problem as a continuous optimization which can be solved numerically with some guarantees on the final solution. Such an approach can even result in continuous and convex problems which can be solved efficiently using polynomial-time algorithms such as interior-point methods  \cite{boyd2004convex}. Second, since the association is  probabilistic (i.e., a device decides to send its updates or not by randomly drawing samples from a probability distribution), it allows the participation of a large number of devices, which leads to some fairness among devices.

\subsection{Contribution}
In this paper, we consider a UAV-assisted network, where UAVs act as relays between IoT devices and a macro base station (BS). Due to the limited communication and energy resources, we assume that only a subset of IoT devices send their updates to the UAVs. Similarly, only a subset of UAVs interact with the BS to send the collected updates. The main objective of this work is to schedule the UAVs' hovering locations and optimize the association probabilities (i.e., the probabilities between IoT devices and UAVs, and between UAVs and the BS) so that the collected updates are kept as fresh as possible. The contributions of our paper are summarized as follows.
\begin{itemize}
    \item We formulate the joint association probabilities and 3D locations problem as a non-linear programming (NLP) optimization where the objective is to minimize the expected AoU under energy and quality of service constraints.
    
    \item To solve the underlying optimization efficiently, we propose a reformulation of the problem. We show that the new formulation of the problem is convex. Hence, we propose an interior-point based
algorithm which provides a near-optimal solution to the studied optimization.

\item Finally, we validate our proposed approach with simulation experiments. In particular, we show that the proposed joint probability association and 3D position scheduling outperforms a benchmark that adopts binary association. 

\end{itemize}

%design a practical data collection system that ensures that the data collected by UAVs is as fresh as possible at the BS. We are interested in finding solutions for the joint association of the UAV IoT devices and the 3D positioning of the UAVs with the aim of minimizing the expected AoU in the system, subject to a required quality of service, energy budget and collision avoidance constraints. The formulated problem is continuous and non-convex, therefore it can be solved with theoretical guarantees of the final solution. 
%the solution of these problems may converge to a local undesirable point.  
%We propose also a new convex reformulation of the problem to solve it more efficiently.

%\begin{itemize}
%    \item We study the problem of minimizing the expected AoU under the constraints of required quality of service, collision avoidance and energy consumption. 
%    \item We propose a new modeling approach that allows to ensure fairness in the association of IoT devices with UAVs and in the association of UAVs with BS. By considering our objective function as an expectation, we propose a Non-Linear Programming (NLP) formulation of the problem.
%    \item In order to efficiently solve the proposed optimization problem, a new convex reformulation of the problem is used. This new formulation allows us to build an iterative algorithm to solve the problem.
%    \item To evaluate the performance of our approach, we compare the solutions of our optimization problem with the solutions obtained using a deterministic approach. 
%\end{itemize}

\subsection{Organization}
The remainder of the paper is organized as follows. First, the studied system model is described in Section~\ref{Sys}. The mathematical formulation of the problem is given in Section~\ref{Prob}. In Section~\ref{ResM}, we propose a convex reformulation of the problem. In Section~\ref{Algo}, we describe the joint association probability and 3D locations to solve the underlying optimization efficiently. Next, in Section~\ref{Simu}, we show the performance of our algorithm for various scenarios. Finally, concluding remarks are provided in Section~\ref{Conc}.

% =============================================
% # II. System model #
% =============================================

\section{System Model}\label{Sys}
\subsection{Communication Model}
We consider a set $\mathcal{I}$ of $I$ IoT devices within an area $\mathcal{A}$. The IoT devices collect data from their surrounding environment and send data updates periodically to a server located at the BS. Since IoT devices have a limited communication range, a set $\mathcal{U}$ of UAVs are deployed to collect data on a regular basis from IoT devices and re-transmit it to the BS. An example of the studied system model is illustrated in Fig.~\ref{fig:systemmodel}. 

\begin{figure}[ht]
    \centering
    \includegraphics[width=1\linewidth]{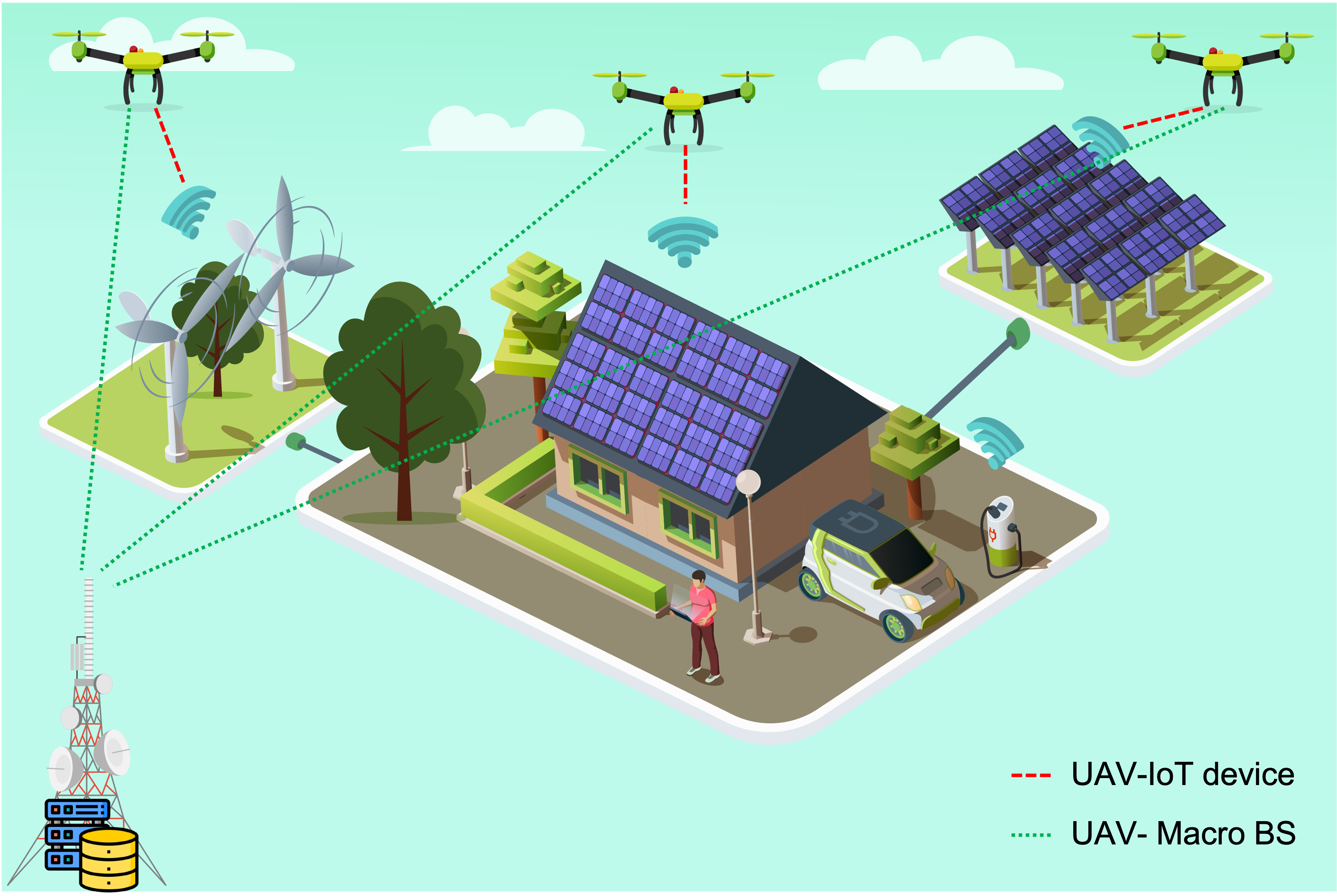}
    \caption{System Model}
    \label{fig:systemmodel}
\end{figure}

We suppose that the IoT devices generate data updates that are transmitted periodically. At the end of each time interval $k$ of duration $\tau$ (i.e., $\left[k\tau,(k+1)\tau \right]$, with $k \in \mathcal{K}= \{0,1,\dots,K\}$, and $KT$ the time horizon),
an IoT device $i$ sends its data updates to UAV $u$ with some probability, which in turn transmits the updates to the macro BS in the following time interval $k+1$ with another probability. Each UAV communicates with both IoT devices and the BS via the air-to-ground channel. To model the air-to-ground channel, we assume a Rician fading distribution $\widehat{\Delta}_{i,u}[k]$

\begin{equation}
    \widehat{\Delta}_{i,u}[k]=\left(\sqrt{\frac{\Omega}{\Omega+1}} \bar{\Delta}_{i,u}[k]+\sqrt{\frac{1}{\Omega+1}} \widetilde{\Delta}_{i,u}[k]\right),
\end{equation}
where $\Omega$ is the Rician factor, $\bar{\Delta}_{i,u}[k]$ is the line-of-sight (LoS) component with $\left|\bar{\Delta}_{i,u}[k]\right|=1$, and $\widetilde{\Delta}_{i,u}[k]$ the random non-line-of-sight (NLoS) component where $\widetilde{\Delta}_{i,u}[k] \sim \mathcal{C N}(0,1)$ where $\mathcal{C N}(0,1)$ is the complex normal distribution.

Each IoT device $i$ transmits its updates with a power $P_i^D[k]$ during time interval $k$. Thus, the received power at UAV $u$ is $\left| \widehat{\Delta}_{i,u}[k]\right|^2\beta_{0}\left(d_{i,u}[k]\right)^{-2}P_i^D[k]$, where  $\beta_{0}$ is the average channel power gain at a reference distance $d_{0}=1 \mathrm{~m}$, and $d_{i,u}[k]$ is the distance between device $i$ and UAV $u$ during time slot $k$,  
\begin{equation}
    d_{i,u}[k]=\left\|\textbf{q}_{i}- \textbf{w}_{u}[k] \right\|_2, \quad k=1,2, \ldots ,K,
\end{equation}
where $\left\|.\right\|_2$ is the two-norm, $\textbf{q}_{i}=\left[ x_{i}, y_{i},0 \right]$ is the position vector of IoT device $i$, and $\textbf{w}_{u}[k]=\left[ x_{u}[k], y_{u}[k],h_{u}[k] \right]$ is the 3D location vector of UAV $u$ during time interval $k$.

%We assume that each IoT device $i$ sends its data updates to, at most, one UAV. We assume an uplink communication where the UAVs relay the data collected from IoT devices to the macro BS. Each UAV communicates with both IoT devices and the macro BS via the air-to-ground channel. To send their data to the macro base station, IoT devices first send their data to UAVs, which in turn transmit them to the macro BS.

%In order to avoid collison, we set a minimum distance between any two UAVs, $d_{min}$, thus

%\begin{equation}
%    \left\| \textbf{w}_{u}[k]-\textbf{w}_{v}[k] \right\|\ge d_{min}\  \forall u \in \mathcal{U},v \in \mathcal{U}, u\neq v,k=1,2, \ldots ,K. 
%\end{equation}

The signal-to-noise ratio (SNR) of IoT device $i$ with respect to UAV $u$ is given by
 \begin{equation}
     \Upsilon_{i,u}^D[k]= P_{i}^D[k]\left|\widehat{\Delta}_{i,u}[k]\right|^{2} \beta_{0}\left(d_{i,u}[k]\right)^{-2} / \sigma^{2}, 
 \end{equation}
 where $\sigma^{2}$ is the thermal noise power. Therefore, the rate of IoT device $i$ when it is associated with UAV $u$ during time slot $k$ can be written
 
 \begin{equation}
     R_{i,u}^D[k]=\Lambda_{i,u}^D[k]\log _{2}\left(1+\Upsilon_{i,u}^D[k]\right),
 \end{equation}
where $\Lambda_{i,u}^D[k]$ is the allocated bandwidth between device $i$ and UAV $u$ during time slot $k$.

Similarly, the rate of UAV $u$ when it transmits the data updates to the BS during time slot $k$ is given by
 \begin{equation}
     R_{u}^U[k]=\Lambda_u^U[k]\log _{2}\left(1+\Upsilon_{u}^U[k]\right),
 \end{equation}
where $\Lambda_u^U[k]$ is the bandwidth of UAV $u$ during time slot $k$, and $\Upsilon_{u}^U[k]$ is the SNR of UAV $u$ when it transmits to the BS during time slot $k$ which is given by

 \begin{equation}
     \Upsilon_{u}^U[k]= P_{u}^U[k]\left|\widehat{\Delta}_{u}[k]\right|^{2} \beta_{0}\left(d_{u}[k]\right)^{-2} / \sigma^{2}, 
 \end{equation}
with $P_{u}^U[k]$ the transmit power of UAV $u$ and  $d_{u}[k]$ the distance between UAV $u$ and the BS during time interval $k$.
\subsection{Energy Consumption Model}
Due to their limited battery budget, it is important to account for the energy that a UAV consumes during its mission. In our work, we account for three types of energy. First, the flying energy, which is the energy to travel from one location to another. Second, the hovering energy, which is the energy that the UAV remains aloft and supports its movement. And finally, the communication energy which is the transmission energy. Accordingly, the total consumed energy by a UAV $u$ during time interval $k$ is given by

\begin{align}
    E_{u}[k]&=E_{u}^{f}[k]+E_{u}^{h}[k]+E_u^c[k]\nonumber\\
    &=P_{u}^{f} \tau_{u}^{f}[k] + (P_{u}^{h} + P_{u}^{U}[k])(\tau-\tau_{u}^{f}[k]),
    \end{align}
where $P_{u}^{f}$ and $P_{u}^{h}$ are the propulsion power of a rotary-wing UAV and the hovering power respectively, that we assume constant over time. $\tau_{u}^{f}[k]$ is the time the UAV $u$ makes to travel from one location to another during time slot $k$, it is given by  $\tau_{u}^{f}[k]=v^{-1} \left\| \textbf{w}_{u}[k]-\textbf{w}_{u}[k+1] \right\|_2$, where $v$ is the speed of the UAV that we assume constant. 

%and $P_{u}^{t}$ are the propulsion power of a rotary-wing UAV, the hovering power and the transmission power respectively, and $T_{u}^{f}=v^{-1} \left\| \textbf{w}_{u}[k]-\textbf{w}_{u}[k+1] \right\|$, where $v$ is the velocity of the UAV, and $\delta_t$ is the length of time slot $k$. These powers, are computed according to~\cite{Zeng2019}. 

\subsection{Age-of-Updates Metric}
The objective of this work is to maximize the total data collected, while maximizing the freshness of the data and ensure that its UAVs complete their misson. In particular, our aim is to minimize the AoU.

Let $A_{i,u}$ be the probabilistic event that IoT device $i$ is associated with UAV $u$. We denote by $a_{i,u}$ the probability that $A_{i,u}=1$. Also, let $B_{u}$ be the probabilistic event that the UAV $u$ is selected by the BS and $b_{u}$ is the probability that $B_{u}=1$. More formally, we have:

\begin{equation*}
A_{i, u}=\left\{\begin{array}{lc}
1, & \text { with probability } a_{i,u}  \\
0, & \text { with probability } 1-a_{i,u}
\end{array}\right.
\end{equation*}
\begin{equation*}
B_{u}=\left\{\begin{array}{lc}
1, & \text { with probability } b_{u}  \\
0, & \text { with probability } 1-b_{u}
\end{array}\right.
\end{equation*}

To model the updates' freshness, we use the same definition as in \cite{Yang2019}. Particularly, if device $i$ is associated with UAV $u$ during time interval $k$, its AoU evolves as follows,

\begin{equation}
    T_{i,u}[k+1]=\left(T_{i,u}[k]+1\right)\left(1-A_{i,u}[k]\right),
    \label{Tiu}
\end{equation}
where $T_{i,u}[0]=0$. $T_{i,u}[k]$ is the age of the updates received by drone $u$ and collected from device $i$ during time interval $k$. Specifically, when the updates of device $i$ are not collected during time interval $k$ (i.e., $A_{i,u}[k]=0$), the AoU is increased by one unit of time. Inversely, when the updates are transmitted, the AoU is reinitialized to zero.

%Similarly, the AoU of UAV $u$ that are received at the BS can be formulated as
%\begin{equation}
%    T_{u}[k+1]=\left(T_{u}[k]+1\right)\left(1-B_{u}[k]\right).
%     \label{Tu}
%\end{equation}
%where $T_{u}[0]=0$. 

Therefore, the AoU of IoT devices whose data transit through UAV $u$ and is received by the BS is given by% the sum of AoU of devices associated with UAV $u$ during time slot $k$
%\begin{equation*}
%    T_{u}[k] = \sum_{ i \in \mathcal{I}} T_{i,u}[k-1] A_{i,u}[k-1]
%\end{equation*}
%Therefore, the AoU of all IoT devices associated to UAV $u$, seen by the macro BS is given by,
\begin{equation}
    T_{u}[k+1]=\left(\sum_{ i \in \mathcal{I}} T_{i,u}[k-1]  A_{i,u}[k-1]+1\right)\left(1-B_{u}[k]\right).
\end{equation}
%The computation of AoU for all devices and all level are described in figure \ref{fig:AoUsystemmodel}
%\begin{figure}[h]
%    \centering
%    \includegraphics[width=0.9\linewidth]{}
%    \caption{AoU computation}
%    \label{fig:AoUsystemmodel}
%\end{figure}

% \houcine{ avoid putting hard equation numberings in the paper. Use labeling instead. Delete the equation numbers from the plot of Figure 2}

%For keeping the freshness of data collected, we have to make sure that they are as fresh as possible at the macro BS level. Therefore, the global AoU of the system is the sum of AoU of all UAV seen by the macro BS. Thus, the global AoU is given by,

Finally, the global AoU at the BS, during time interval $k+1$, with $k>1$, can be written
\begin{equation}
{AoU}[k+1]=\!\!\sum_{  u\in \mathcal{U}}\!\!\left(\!\!\sum_{ i \in \mathcal{I}}\left( T_{i,u}[k-1] A_{i,u}[k-1]+1\right) \left(1-B_{u}[k]\right)\!\!\right)
\label{GlobalAOU}
\end{equation}

Due to the random behavior of events $A_{i,u}$ and $B_u$, we target to minimize the expected value of the global AoU as perceived by the BS. In Lemma~\ref{lemma1}, we give the expression of the studied metric.
\begin{figure*}
\hrule
\begin{equation}
\begin{array}{rcl}
& \mathbb{E}(AoU[k+1])=  \mathbb{E}\Bigg[ \sum_{  u\in \mathcal{U}}\sum_{ i \in \mathcal{I}}\bigg(\left( T_{i,u}[k-1] A_{i,u}[k-1]+1\right) \left(1-B_{u}[k]\right)\bigg)\Bigg]\\
& =  \mathbb{E}\Bigg[ \sum_{  u\in \mathcal{U}}\sum_{ i \in \mathcal{I}}\bigg(T_{i,u}[k-1] A_{i,u}[k-1] +(1 - B_{u}[k])  - T_{i,u}[k-1] A_{i,u}[k-1] B_{u}[k]\bigg) \Bigg]\\
& =  \sum_{  u\in \mathcal{U}}\sum_{ i \in \mathcal{I}}\Bigg(\mathbb{E}\bigg[T_{i,u}[k-1] A_{i,u}[k-1]\bigg] +\mathbb{E}\bigg[(1 - B_{u}[k])\bigg]  -\mathbb{E}\bigg[T_{i,u}[k-1] A_{i,u}[k-1] B_{u}[k]\bigg]\Bigg)  \\
& =  \sum_{  u\in \mathcal{U}}\sum_{ i \in \mathcal{I}}\Bigg(T_{i,u}[k-1] \mathbb{E}\Big[A_{i,u}[k-1]\Big] +\bigg(1 - \mathbb{E}\Big[B_{u}[k]\Big]\bigg) - T_{i,u}[k-1] \mathbb{E}\Big[A_{i,u}[k-1]\Big] \mathbb{E}\Big[B_{u}[k]\Big]\Bigg)  \\
& =  \sum_{  u\in \mathcal{U}} \sum_{  i\in \mathcal{I}}\left(T_{i,u}[k-1] a_{i,u}[k-1] +(1 - b_{u}[k])  -T_{i,u}[k-1] a_{i,u}[k-1] b_{u}[k]\right)
\end{array}\\
\label{prooflemma1}
\end{equation}
\end{figure*}
\begin{lemma}
Under the assumption that $A_{i,u}[k]$ and $B_{u}[k]$ are independent for all $k$, $u$ and $i$, the expected value of the global AoU, during time interval $k+1$ is given by
\begin{equation}
\begin{aligned}
& \mathbb{E}(AoU[k+1])= \sum_{  u\in \mathcal{U}} \sum_{  i\in \mathcal{I}}\left(T_{i,u}[k-1] a_{i,u}[k-1]+(1 - b_{u}[k])  \right.\\
&\left.-T_{i,u}[k-1] a_{i,u}[k-1] b_{u}[k]\right).
\end{aligned}
\label{expectedAoU}
\end{equation}
\label{lemma1}
\end{lemma}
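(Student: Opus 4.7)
The plan is to compute $\mathbb{E}(AoU[k+1])$ directly from the definition in \eqref{GlobalAOU} by exploiting linearity of expectation together with the stated independence assumption on $A_{i,u}[k]$ and $B_u[k]$, treating $T_{i,u}[k-1]$ as a quantity whose value is fixed by the time the expectation is taken (i.e., conditioning on the history up to slot $k-1$, which is the usual convention in recursive AoU/AoI analyses).

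First I would expand the product inside \eqref{GlobalAOU}:
\begin{equation*}
(T_{i,u}[k-1] A_{i,u}[k-1] + 1)(1 - B_u[k]) = T_{i,u}[k-1] A_{i,u}[k-1] + (1-B_u[k]) - T_{i,u}[k-1] A_{i,u}[k-1] B_u[k].
\end{equation*}
Then, by the linearity of $\mathbb{E}[\cdot]$ and the fact that the sums over $u \in \mathcal{U}$ and $i \in \mathcal{I}$ are finite, I can pull $\mathbb{E}$ inside both sums and evaluate the three resulting terms separately. For the first term, $T_{i,u}[k-1]$ is a constant with respect to the randomness introduced at slot $k-1$ for the association draw (viewed as deterministic under conditioning), so $\mathbb{E}[T_{i,u}[k-1] A_{i,u}[k-1]] = T_{i,u}[k-1]\,\mathbb{E}[A_{i,u}[k-1]] = T_{i,u}[k-1]\, a_{i,u}[k-1]$. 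For the second term, $\mathbb{E}[1 - B_u[k]] = 1 - b_u[k]$.

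The crux, and the only place the hypothesis is actually used, is the third term: by the assumed independence of $A_{i,u}[k-1]$ and $B_u[k]$, the expectation of the product factors, giving
\begin{equation*}
\mathbb{E}[T_{i,u}[k-1] A_{i,u}[k-1] B_u[k]] = T_{i,u}[k-1]\, a_{i,u}[k-1]\, b_u[k].
\end{equation*}
Collecting the three contributions reproduces exactly the right-hand side of \eqref{expectedAoU}.

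The main obstacle is conceptual rather than computational: one must justify why $T_{i,u}[k-1]$ can be taken outside the expectation, since by its recursion \eqref{Tiu} it is itself a function of the earlier random variables $A_{i,u}[0],\dots,A_{i,u}[k-2]$. The clean way to handle this is to interpret $\mathbb{E}(AoU[k+1])$ as a conditional expectation given the filtration up to slot $k-2$, so that $T_{i,u}[k-1]$ is measurable and hence constant, while $A_{i,u}[k-1]$ and $B_u[k]$ remain independent Bernoulli variables with parameters $a_{i,u}[k-1]$ and $b_u[k]$. Once that interpretation is fixed, the derivation reduces to the routine algebra sketched above.
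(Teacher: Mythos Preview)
Your proposal is correct and follows essentially the same route as the paper: expand the product in \eqref{GlobalAOU}, apply linearity of expectation, pull $T_{i,u}[k-1]$ out as a constant, and use the independence of $A_{i,u}[k-1]$ and $B_u[k]$ to factor the cross term. If anything, your filtration/conditioning remark makes explicit a step the paper leaves implicit when it silently treats $T_{i,u}[k-1]$ as deterministic.
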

\begin{proof}
This expression is obtained by developing the AoU and applying the properties of the expectation, as shown in equation~ (\ref{prooflemma1}).

\end{proof}

Next, we formulate the expected AoU optimization as a NLP. Our target is to find the optimal probability association and 3D positioning of UAVs under energy and quality of service constraints.
%After defining the system model and the metrics used in this work, we will in the next section formulate the optimization problem that allows us to determine the linkage probabilities.
% =============================================
% # III. Problem formulation #
% =============================================
\section{Problem Formulation }\label{Prob}
In this section, we first describe the constraints of the studied system. Then, we propose a mathematical formulation of the problem.
%Using the concept of AoU, we will create an association and positioning protocol that maintains the freshness of the data collected by the UAVs at the base station, minimising the AoU expected in each communication round. 

%In this paper, we consider a problem of positioning drones and associating them with IoT devices and macro BS, which consists in finding the optimal positioning of drones that relay between ground-based IoT devices and remote macro BS, collecting data from each served IoT device and relaying it to the macro BS, while maximizing the freshness of this data and maintaining a required quality of service.
First, to ensure that the updates are transmitted timely, it is required that the expected uplink rate is above a predefined threshold. Let $\bar{R_I}$ and $\bar{R_U}$ be the rate thresholds for IoT devices and UAVs, respectively. As a consequence, the quality of service constraints are expressed as follows

%In particular, a high rate guarantees that the transmission is performed within a certain time interval. 
%To provide a required quality of service, update transmission must be completed within a certain time frame, therefore, the selected IoT devices, like UAVs must have an individual expected rate that exceeds a predefined threshold respectively noted $R_{I}$ and $R_{U}$. These constraints can be written as follows,
\begin{equation}
   % \begin{array}{rcl}
 %R_{i}[k] & \ge & A_{i,u}[k] R_{I},\quad i \in \mathcal{I} \\
 %\implies 
 a_{i,u}[k]R_{i,u}^D[k]  \ge  \bar{R_{I}},\quad \forall i \in \mathcal{I}, \forall k \in \mathcal{K}
 %\\
%\end{array}
  \label{third:b}
 %   \begin{array}{rcl}
 %B_{u}[k] R_{u}[k] & \ge & R_{U},\quad u \in \mathcal{U} \\
% \implies 
\end{equation}

\begin{equation}
b_{u}[k] R_{u}^U[k]  \ge  \bar{R_{U}} ,\quad \forall u \in \mathcal{U}, \forall k \in \mathcal{K}. 
%\\
%\end{array}
  \label{third:c}
\end{equation}
Additionally, UAVs must be able to accomplish their mission without exceeding their energy budget $E_u^{\rm max}$. Therefore, the expected consumed energy should satisfy at each time slot the following constraint 
\begin{equation}
    %\begin{array}{rcl}
%E_{u} B_{u}[k] & \le &E_{u}^{\text{max}} , \; u \in \mathcal{U} \\
%\implies 
 b_{u}[k] E_{u}[k] \le  E_{u}^{\text{max}} , \; \forall u \in \mathcal{U}, \forall k \in \mathcal{K}.
%\end{array}
\label{third:f}
\end{equation}
Moreover, to avoid any collision between UAVs, we set a minimum distance $d_{min}$ between any two UAVs, thus

\begin{equation}\label{dist}
    \left\| \textbf{w}_{u}[k]-\textbf{w}_{v}[k] \right\|_2\ge d_{min}\  \forall u \in \mathcal{U},v \in \mathcal{U}, u\neq v, \forall k \in \mathcal{K}. 
\end{equation}
%To formulate the problem mathematically, let us denote by \\ $\mathbf{a}[k]=\left\{a_{i,u}[k], i \in \mathcal{I}, u \in \mathcal{U} \right\}$,  $\mathbf{b}[k]=\left\{b_{u}[k], u \in \mathcal{U}\right\}$, and $\mathcal{K}=\{1,\dots,K\}$. 
Finally, our target is to optimize the expected AoU over time. Therefore, the optimization problem is formulated as follows. 
\begin{subequations}\label{prob:main}
\begin{align}
\begin{split}
\begin{aligned}
& \mathcal{P}:\min_{\mathbf{a},\mathbf{b},\mathbf{w}}\sum_{k \in \mathcal{K}}\sum_{  u\in \mathcal{U}} \sum_{  i\in \mathcal{I}}\left(T_{i,u}[k-1] a_{i,u}[k-1] +(1 - b_{u}[k])  -\right.\\
&\left.T_{i,u}[k-1] a_{i,u}[k-1] b_{u}[k]\right)
\end{aligned}
\label{third:a}
\end{split}\\
\begin{split}
 \text{s . t.} \quad (\ref{third:b}) - (\ref{dist})
\end{split}\\
%\begin{split}
%\left\| \textbf{w}_{u}[k]-\textbf{w}_{v}[k] \right\|\ge d_{min}\ ,  \forall u,v \in \mathcal{U}\  u\neq v 
%\label{third:d}
%\end{split}\\
\begin{split}
 \sum_{u \in \mathcal{U}}a_{i,u}[k]\le 1, \; \forall i \in \mathcal{I}, \forall k \in \mathcal{K}
  \label{third:e}
\end{split}\\
\begin{split}
0 \le a_{i,u}[k] \le 1, \forall i \in \mathcal{I}, u \in \mathcal{U}, \forall k \in \mathcal{K}
  \label{third:h}
\end{split}\\
\begin{split}
 0 \le b_{u}[k] \le 1, \forall u \in \mathcal{U}, \forall k \in \mathcal{K}
  \label{third:i}
\end{split}\\
\begin{split}
     x_{u\text{min}} \le x_{u}[k] \le x_{u\text{max}}, \forall u \in \mathcal{U}, \forall k \in \mathcal{K}
  \label{third:l}
\end{split}\\
\begin{split}
    y_{u\text{min}} \le y_{u}[k] \le y_{u\text{max}}, \forall u \in \mathcal{U}, \forall k \in \mathcal{K}
  \label{third:m}
\end{split}\\
\begin{split}
    z_{u\text{min}}\le z_{u}[k] \le z_{u\text{max}},\forall u \in \mathcal{U}, \forall k \in \mathcal{K}
  \label{third:n}
\end{split}
\end{align}
\label{PF}
\end{subequations}
where $\mathbf{a}$ and $\mathbf{b}$ are the association probability vectors and $\mathbf{w}$ is the 3D position vector over time. In this formulation, the constraint (\ref{third:e}) ensures that the association probabilities over UAVs do not exceed $1$. The constraints (\ref{third:h}) and (\ref{third:i}) ensure that the probability vectors $\mathbf{a}$ and $\mathbf{b}$ are well-defined. Finally, the constraints (\ref{third:l}), (\ref{third:m}) and (\ref{third:n}) restrict the movement of the UAVs to a limited $3D$ space. 

The Problem $\mathcal{P}$ includes only  continuous variables, which makes it a continuous non-linear and non-convex programming problem. It is not convex, because constraints (\ref{third:b}), (\ref{third:c}) and the objective function are not convex.  
%In addition, the maximum energy budget problem is equivalent to the well known knapsack problem, which be proven to be NP-hard. To avoid this obstacle,  
In the next section, we propose a convex reformulation of Problem $\mathcal{P}$ that can be solved efficiently using methods of convex programming~\cite{boyd2004convex}.
% ==================
% # IV. Problem reformulation
% ==================

\section{A Convex Reformulation of AoU Optimization}\label{ResM} 

%We can solve a problem efficiently if we can formulate it as a convex optimisation problem. We could even say that by rewriting a practical problem as a convex optimisation problem, we have solved the original problem. In general, there is no analytical formula for solving convex optimisation problems, although there are very efficient methods for it. Interior-point approaches~\cite{boyd2004convex}, for example, are quite effective in practice. They can be shown to solve a problem in polynomial time. 

In this section, 
%we will deal with the non-convexity problem of constraints (\ref{third:b}), (\ref{third:c}) and the objective function (\ref{expectedAoU}). Then 
 we propose a new convex reformulation of Problem $\mathcal{P}$. For all $k$, to deal with the non-convexity factor $a_{i,u}[k-1]  b_{u}[k]$ in the objective function, we introduce a  slack variable $t_k=a_{i,u}[k-1] b_{u}[k]$.  
 %Hence the objective function became, 
 Since $t_k$ is the multiplication of two probabilities, it should be between $0$ and $1$. The objective function using the new variables then becomes  
 \begin{equation*}
\begin{aligned}
& \sum_{k \in \mathcal{K}} \sum_{  u\in \mathcal{U}} \sum_{  i\in \mathcal{I}}\left(T_{i,u}[k-1] a_{i,u}[k-1] +(1 - b_{u}[k])  -\right.\\
&\left.T_{i,u}[k-1] t_k\right),
\end{aligned}
\end{equation*}
 which is a linear function of the decision variables. The following lemma deals with the non convexity of the constraint (\ref{third:b}). The constraint (\ref{third:c}) is similar to (\ref{third:b}), thus the same reformulation applies. 

 \begin{lemma}
For each $k\in \mathcal{K}$, the IoT device's rate constraint (\ref{third:b}), can be rewritten as 
\begin{equation}
   \frac{e^{\left(\frac{\bar{R_{I}}}{a_{i,u}[k]\Lambda_{i,u}^D}\right)}-1}{P_{i}^D/\sigma^{-2}\left|\widehat{\Delta}_{i,u}[k]\right|^{2} \beta_{0}} - t_{1} \le 0, \text{ where }
    \label{conv_rate_Iot}
\end{equation}
 \begin{equation}
    \frac{1}{(x_{u}[k]-x_{i})^{2}+(y_{u}[k]-y_{i})^{2}+(z_{u}[k])^{2}}-t_{1}=0.
    \label{t1}
\end{equation}
 The functions defining (\ref{conv_rate_Iot}) and (\ref{t1}) are both convex.
\label{prop1}
\end{lemma}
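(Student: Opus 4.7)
My plan is to derive (\ref{conv_rate_Iot}) and (\ref{t1}) by a direct algebraic manipulation of (\ref{third:b}) together with an auxiliary slack variable, and then verify the convexity claims for each resulting function.

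Starting from $a_{i,u}[k]\Lambda_{i,u}^D\ln(1+\Upsilon_{i,u}^D[k])\ge \bar{R_{I}}$ (reading the capacity formula with the natural logarithm, so that the $e$ appearing in the target expression arises cleanly), and assuming $a_{i,u}[k]>0$ (the case $a_{i,u}[k]=0$ makes the left-hand side of (\ref{third:b}) vanish and is handled at the boundary), I would divide by $a_{i,u}[k]\Lambda_{i,u}^D>0$, exponentiate, subtract one, and substitute the explicit SNR expression. Isolating the inverse squared distance on one side produces
\begin{equation*}
\frac{\bigl(e^{\bar{R_{I}}/(a_{i,u}[k]\Lambda_{i,u}^D)}-1\bigr)\sigma^{2}}{P_i^D\,|\widehat{\Delta}_{i,u}[k]|^{2}\beta_0}\;\le\;\frac{1}{(x_u[k]-x_i)^2+(y_u[k]-y_i)^2+z_u^2[k]}.
\end{equation*}
Defining the slack variable $t_1$ by (\ref{t1}) and substituting it on the right-hand side yields exactly (\ref{conv_rate_Iot}), establishing the equivalence.

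For the convexity of the LHS of (\ref{conv_rate_Iot}), the only non-affine piece is $a\mapsto e^{c/a}$ on $a>0$ with $c=\bar{R_{I}}/\Lambda_{i,u}^D>0$. A direct second-derivative calculation gives
\begin{equation*}
\frac{d^{2}}{da^{2}}\,e^{c/a}=e^{c/a}\,\frac{c}{a^{3}}\Bigl(\frac{c}{a}+2\Bigr)>0,
\end{equation*}
so this scalar composition is convex on $a>0$. Positive scaling by $\sigma^{2}/(P_i^D|\widehat{\Delta}_{i,u}[k]|^{2}\beta_0)$ and subtraction of the affine term $t_1$ preserve convexity, hence the LHS of (\ref{conv_rate_Iot}) is jointly convex in $(a_{i,u}[k],t_1)$.

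The genuine difficulty lies in the convexity statement for (\ref{t1}). Because $\mathbf{w}_u\mapsto (x_u-x_i)^2+(y_u-y_i)^2+z_u^2$ is a convex quadratic, its reciprocal is not jointly convex on an unrestricted domain, and the corresponding equality between a non-affine function and $t_1$ generally fails to define a convex feasible region. My plan is therefore to treat (\ref{t1}) as an implicit definition of $t_1$ and, inside the eventual convex program, to argue convexity on the restricted admissible set $z_u\ge z_{u\min}>0$, $t_1>0$, either through a perspective-function representation of the reciprocal of the squared distance, or by keeping only the one-sided inequality forced by (\ref{conv_rate_Iot}) together with a bordered-Hessian verification. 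This is the step that demands real work; the algebraic reduction and the convexity of (\ref{conv_rate_Iot}) are essentially mechanical consequences of the scaling and composition rules.
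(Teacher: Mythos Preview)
Your algebraic reduction of (\ref{third:b}) to (\ref{conv_rate_Iot})--(\ref{t1}) is exactly the route the paper takes: exponentiate, isolate the SNR, and absorb the inverse squared distance into a slack variable $t_{1}$. Your second-derivative check for $a\mapsto e^{c/a}$ is more explicit than the paper, which simply states that $h_{1}(x)=e^{1/x}$ is convex and leaves it at that.

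The substantive divergence is on (\ref{t1}). The paper's entire argument for the convexity of that piece is the single sentence ``the functions $h_{1}(x)=e^{1/x}$ and $h_{2}(x)=1/x^{2}$ are both convex.'' In other words, the paper treats the reciprocal squared distance as a scalar function $1/x^{2}$ and does not engage with the multivariate issue you raise. Your observation that $\mathbf{w}_{u}\mapsto 1/\|\mathbf{w}_{u}-\mathbf{q}_{i}\|^{2}$ is \emph{not} jointly convex is correct (the Hessian at, say, $(r,0,0)$ has eigenvalues proportional to $6,-2,-2$), so the equality constraint (\ref{t1}) cannot define a convex feasible set in $(\mathbf{w}_{u},t_{1})$. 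The workarounds you sketch --- restricting to a one-sided inequality tight at optimality, or reparametrizing via a perspective form --- are natural fixes, but none of them appears in the paper; the paper simply asserts scalar convexity of $1/x^{2}$ and moves on. So you have not missed an idea present in the paper: rather, you have identified a genuine gap that the paper's proof does not close.
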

\begin{proof}
After applying the exponential function, the constraint (\ref{third:b}) becomes 
$$
\Upsilon_{i,u}^D[k]\geq e^{\left(\frac{\bar{R_{I}}}{b_{u}[k]\Lambda_{i,u}^D}\right)}-1,
$$
where $\Upsilon_{i,u}^D[k]=P_i^D[k]\left|\widehat{\Delta}_{i,u}[k]\right|^{2} \beta_{0}\sigma^{-2}\left(d_{i,u}[k]\right)^{-2}$.
Now, by letting 
$
    t_{1}=\frac{1}{(x_{u}[k]-x_{i})^{2}+(y_{u}[k]-y_{i})^{2}+(z_{u}[k])^{2}},
$
we get 
$$
  \frac{e^{\left(\frac{\bar{R_{I}}}{a_{i,u}[k]\Lambda_{i,u}^D}\right)}-1}{P_i^D[k]/\sigma^{-2}\left|\widehat{\Delta}_{i,u}[k]\right|^{2} \beta_{0}} - t_{1} \le 0.
$$
Finally, note that the functions $h_1(x)=e^{\frac{1}{x}}$, and $h_2(x)=\frac{1}{x^2}$ are both convex.
%Now let us show that these two constraints are convex. It's straightforward that (\ref{t1}) is convex. Then, the Hessian of  $R_{i}[k]$ is, 
%$$\textbf{H}_{R_{i}[k]}=\frac{\frac{R_{I}}{\Lambda_{i}}\left(e^{\frac{R_{I}}{\Lambda_{i}a_{i,u}[k]}}\frac{R_{I}}{\Lambda_{i}}+2e^{\frac{R_{I}}{\Lambda_{i}a_{i,u}[k]}}a_{i,u}[k]\right)}{a^4_{i,u}[k]P_i^t/\sigma^{2}\left|\widehat{\Delta}_{i,u}[k]\right|^{2} \beta_{0}} \ge 0,$$
%which proves the convexity of (\ref{conv_rate_Iot}). Then follows the lemma.

\end{proof}

%\begin{lemma}
%The UAVs rate constraint (\ref{third:c}), can be written as,
%\begin{equation}
%   \frac{e^{(\frac{R_{U}}{b_{u}[k]\Lambda_{u}})}-1}{P_{u}^{t}[k]/\sigma^{2}\left|\widehat{\Delta}_{u}[k]\right|^{2} \beta_{0}} - t_{2} \le 0,
%    \label{conv_rate_UAV}
%\end{equation}
%where,
%\begin{equation}
 %   t_{2}-\frac{1}{(x_{u}[k]-x)^{2}+(y_{u}[k]-y^{2}+(z_{u}[k])^{2}}=0.
 %   \label{t2}
%\end{equation}
%\end{lemma}
%\begin{proof}
%The proof is similar for the proof of lemma %\ref{prop1}.
%\end{proof}
Note all the remaining constraints in the definition of Problem $\mathcal{P}$ are convex.

%Now, we can reformulate $\mathcal{P}$ as:
%\begin{subequations}\label{prob:second}
%\begin{align}
%\begin{split}
%\begin{aligned}
%& \mathcal{P}_{1}:\quad \min_{\mathbf{a},\mathbf{b},\mathbf{w}}\sum_{k \in \mathcal{K}}\sum_{  u\in \mathcal{U}} \sum_{  i\in \mathcal{I}}\left(T_{i,u}[k-1] a_{i,u}[k-1] +\delta_t (1 - b_{u}[k])  -\right.\\
%&\left.T_{i,u}[k-1] t\right)
%\end{aligned}
%\label{second:a}
%\end{split}\\
%\begin{split}
%  \text{s . t.} \quad (\ref{conv_rate_Iot})-(\ref{t2}) %\frac{e^{(\frac{a_{i,u}[k]R_{I}}{B_{i}})}-1}{P/\sigma^{-2}\left|\widehat{\Delta}_{i,u}[k]\right|^{2} \beta_{0}} - t_{1} \le 0, \; i \in \mathcal{I}, u \in \mathcal{U}
%   \label{second:b}
%\end{split}\\
% \begin{split}
% \frac{e^{(\frac{b_{u}[k]R_{U}}{B_{i}})}-1}{P_{u}[k]/\sigma^{-2}\left|\widehat{\Delta}_{u}[k]\right|^{2} \beta_{0}} - t_{2} \le 0, \; u \in \mathcal{U}
%   \label{second:c}
% \end{split}\\
%\begin{split}
%  \quad \quad \quad (\ref{third:d})-(\ref{third:n})
%  \label{second:d}
%\end{split}
%\end{align}
%\label{PF1}
%\end{subequations}
%$\mathcal{P}_{1}$ is converts into a convex problem using the above transformation, and can be solved optimally. 
Now that we showed the convex reformulation of  Problem $\mathcal{P}$. In the next section, we outline the main steps of our proposed approach to perform the Joint probability Association and 3D position Scheduling (JAS) of IoT devices, UAVs and the BS.

% ==================
% # V. Algorithm #
% ==================
\section{Joint probability Association and position Scheduling for AoU Minimization}\label{Algo}
\begin{figure*}[t]
% \minipage{0.24\textwidth}
% \centering
%   \includegraphics[width=0.8\linewidth]{}
%   \caption{UAV trajectory and \\association for $k=2$ }
% \label{UAV_assoc_k_2}
% \endminipage\hfill
% \minipage{0.24\textwidth}
%   \includegraphics[width=0.8\linewidth]{}
%         \caption{UAV trajectory and \\association for $k=3$}
%         \label{UAV_assoc_k_3}
% \endminipage\hfill
\centering
\minipage{0.24\textwidth}
  \includegraphics[width=1\linewidth]{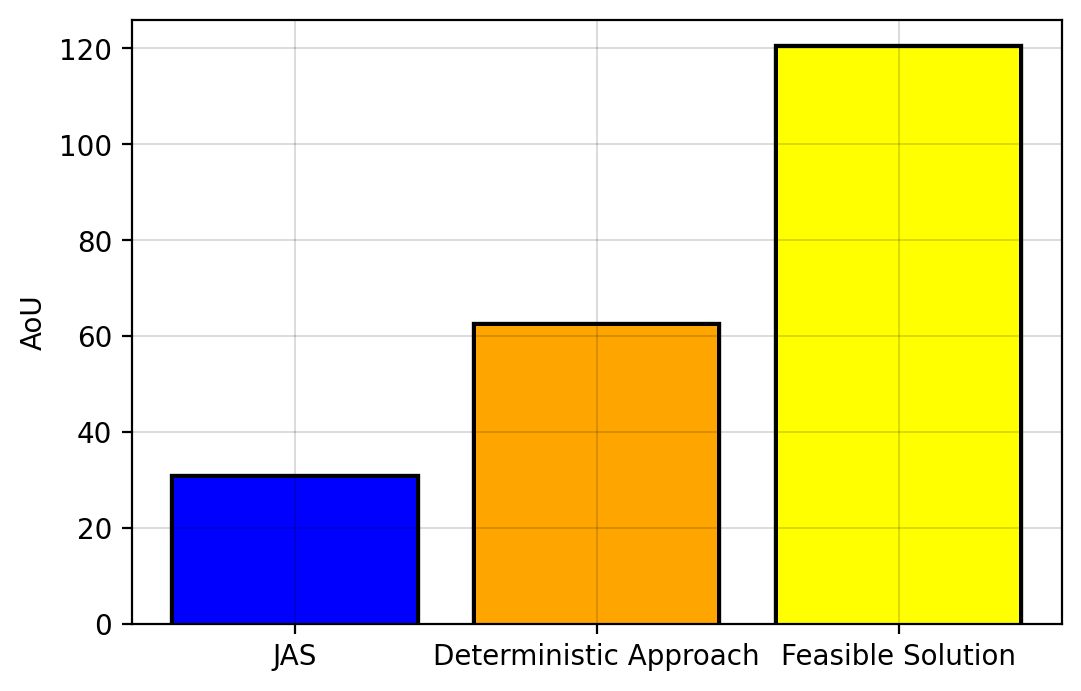}
    \caption{Comparison between \\ JAS and Deterministic approach}
    \label{fig:comp}
\endminipage\hfill
\minipage{0.24\textwidth}
  \includegraphics[width=1\linewidth]{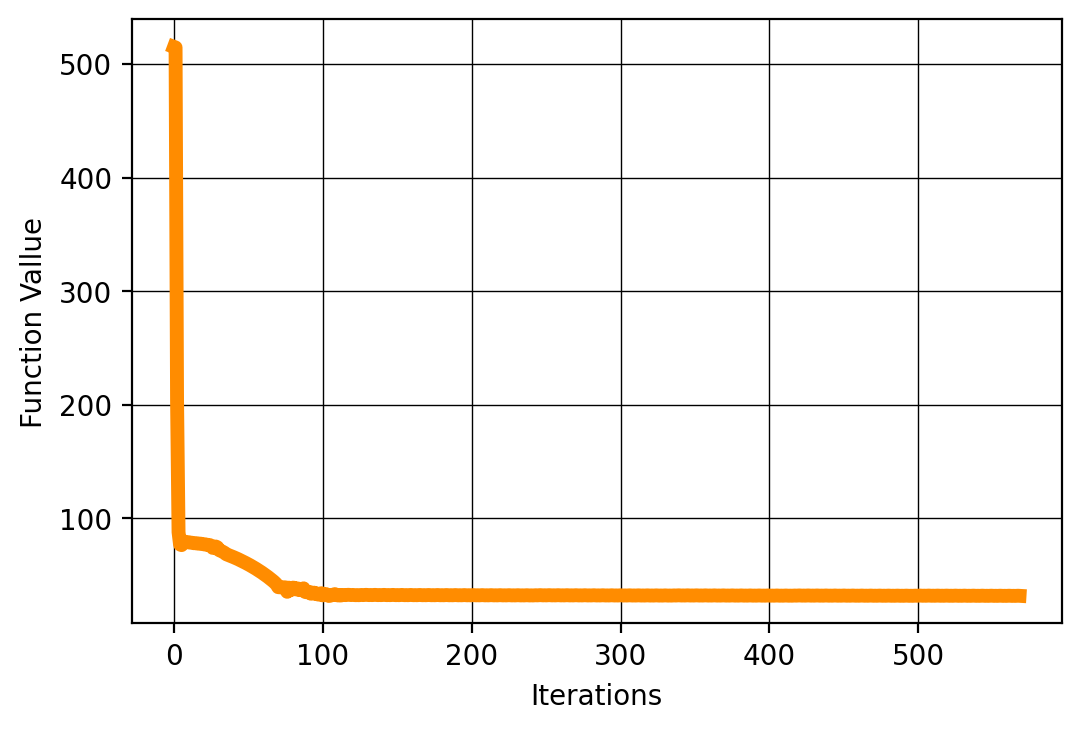}
    \caption{Expected AoU over iterations}
    \label{fig:funcplot}
\endminipage \hfill
\minipage{0.24\textwidth}
  \includegraphics[width=1\linewidth]{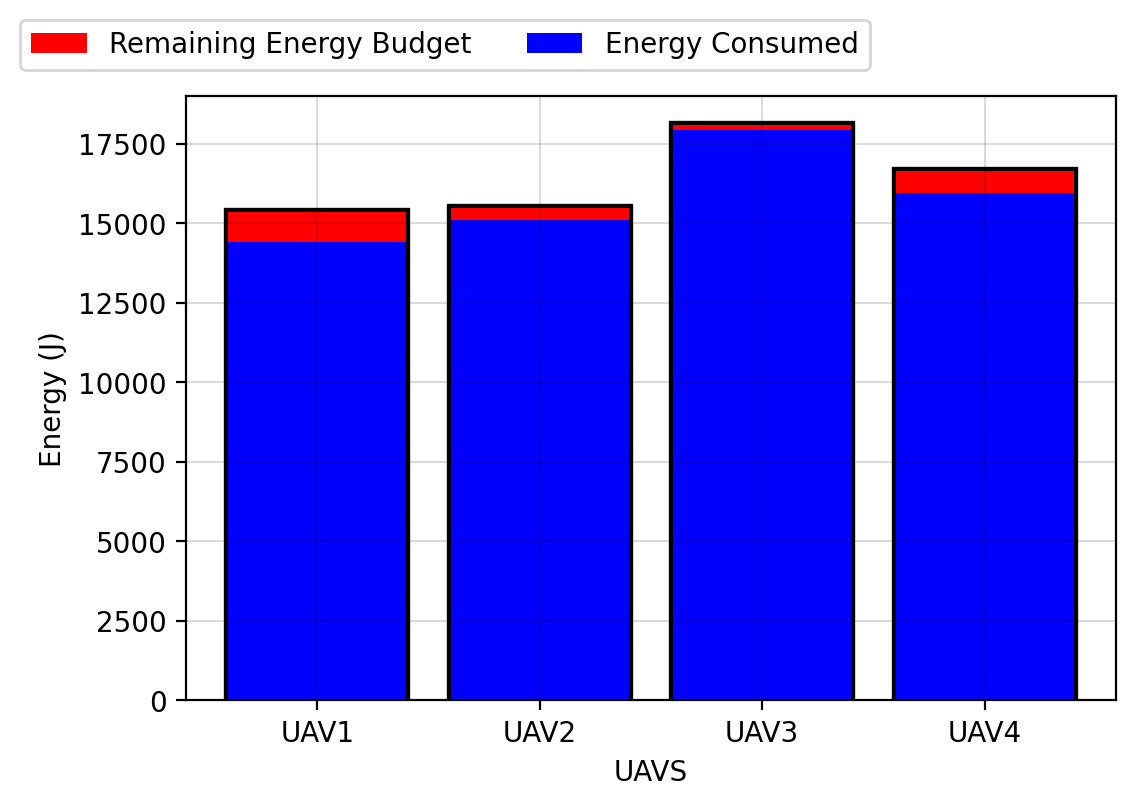}
    \caption{Energy consumption by each UAV}
    \label{fig:energyconsumption}
    \endminipage\hfill
\minipage{0.24\textwidth}%
\includegraphics[width=1\linewidth]{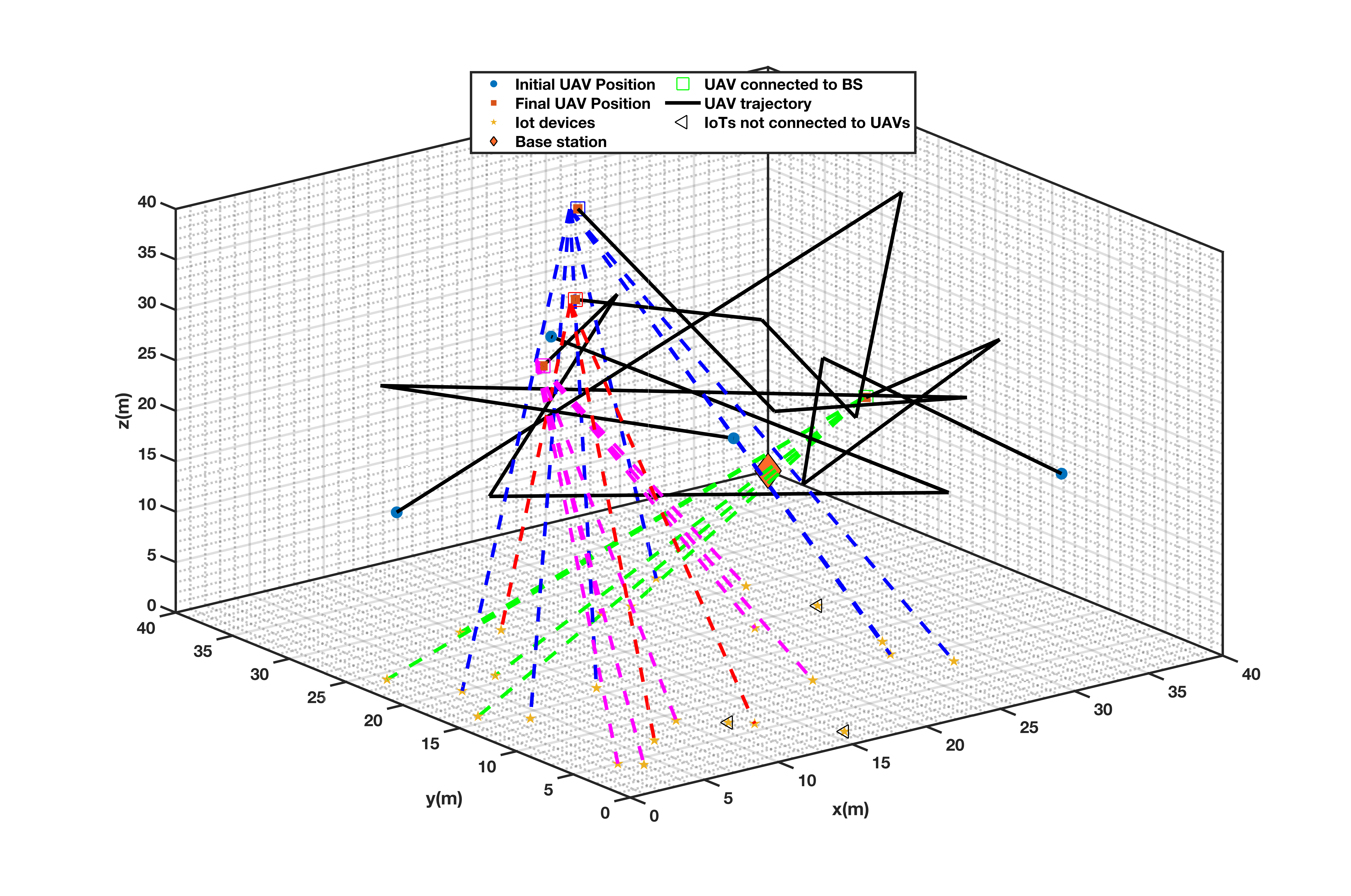}
        \caption{UAV trajectory and association for $k=5$}
        \label{UAV_assoc_k_4}
\endminipage
\end{figure*}

% \begin{figure}[t]
%  \centering
% \minipage{0.45\textwidth}
%   \includegraphics[width=0.45\linewidth]{}
%     \caption{Comparison between \\ JAS and Deterministic approach}
%     \label{fig:comp}
% \endminipage\hfill
% \minipage{0.45\textwidth}%
%   \includegraphics[width=0.45\linewidth]{}
%     \caption{Energy Consumption by each UAV}
%     \label{fig:energyconsumption}
% \endminipage
% \end{figure}
Now that we have succeeded in getting a convex reformulation of $\mathcal{P}$, we use convex optimization methods to solve it efficiently, namely the interior-point method. 

After obtaining the optimal probabilities, in practice at each time interval $k$, we use the  procedure \textbf{U2B} to decide the association between the UAVs and the BS, and \textbf{I2U} to decide the association of IoT devices with the UAVs.

\textbf{U2B procedure}: Let $\textbf{b}^*$ be the optimal UAV-BS association vector obtained by solving the convex optimization. To link a given UAV $u$ to the BS during time interval $k$, we draw a random number between $0$ and $1$. If this number is lower than the  obtained optimal association probability $b_{u}[k]^*$, UAV $u$ is associated with the BS during time interval $k$ and therefore is able to transmit the collected updates to the BS.

\textbf{I2U procedure}: Let $\textbf{a}^*$ be the optimal IoT device-UAV association matrix obtained by solving the convex optimization problem. We assume that during a time interval $k$, a device $i$ is allowed to send its updates to at most one UAV. The question that arises here is how to select at most one UAV according to the association probability vector $\textbf{a}^*$? 
For ease of notations, we remove the dependency on $k$. The probability selection vector of device $i$  with respect to the UAVs is given by $\textbf{a}_i^*=(a_{i,1}^*,\dots,a_{i,U}^*)$. To pick at most one UAV according to the probability vector $\textbf{a}_i^*$, we construct a virtual set of UAVs, where each UAV is represented according to its probability of selection. For example, let $U=3$ and  $\textbf{a}_i^*=(0.3,0.2,0.1)$. To select its associated UAV, device $i$ has to pick uniformly a random one UAV from the set $\{\text{UAV}_1,\text{UAV}_1,\text{UAV}_1,\text{UAV}_2,\text{UAV}_2,\text{UAV}_3,\text{UAV}_0,\text{UAV}_0,\\ \text{UAV}_0,\text{UAV}_0\}$ with $\text{UAV}_0$ is a hypothetical UAV which simply indicates that device $i$ will not be associated during that time interval.

%To associate device $i$ with UAV $u$, we consider its association probabilities $a_{i,u}[k]$ with the respective UAVs. Then, we compute the probability $\mu=1-\sum_{u \in \mathcal{U}}a_{i,u}[k]$, which represents the probability that the IoT device $i$ does not get associated to any UAV. We denote by UAV$_{\mu}$ an hypothetical UAV whose association probability with IoT device $i$ is $\mu$. Afterwards, each probability is mapped to an integer $N$ in the range $[0,10]$, such that $N_u=\left\lfloor 10 \times a_{i.u}[k]  \right\rfloor, \forall u$ and $N_{\mu}=\left\lfloor 10 \times \mu \right\rfloor$ for UAV$_\mu$. Next, we construct a set where each UAV $u$ is represented $N_u$ times $\forall u$ and $N_\mu$ times for the UAV$_\mu$. Under these circumstances, we randomly select a UAV from the constructed set, which will be finally associated with the IoT device $i$. It is worth noting that if the UAV$_\mu$ is selected that means that the IoT device $i$ will not be associated to any UAV. With this approach, any UAV with a high probability has more chance to be selected and at the same time, all UAVs are likely to be selected, thus, ensuring a fairness in the association between UAVS and IoT devices.

Algorithm \ref{alg:1} summarizes  the different steps of our approach.
\begin{algorithm}[h]
\caption{Joint probability Association and position Scheduling (JAS)}\label{alg:1}
\begin{algorithmic}[1]
\State{\textbf{Input: } The maximum energy budget: $E_{u}^{max} \, ~ \forall u \in \mathcal{U}$ , the minimum IoT rate: $\bar{R_{I}}$, the minimum UAV rate: $\bar{R_{U}}$, the minimum distance between UAV: $d_{min}$. $K$ number of communication rounds.}
\State{\textbf{Optimization:} }
%\State
{Solve the convex reformulation of Problem $\mathcal{P}$ using a convex programming method, such as the interior-point method, to obtain the optimal association probabilities $\mathbf{a}^*$ and $\mathbf{b}^*$; and the optimal UAVs' positions over time.
%Set %the initial trajectory, $\mathbf{w}[k]$, the initial probabilities $\mathbf{a}[k]$ and $\mathbf{b}[k]$ and 
%the communication round $k=1$
}\label{Line1}
\State{\textbf{Association:} }
\While{$k \le K$}
    %\State{Solve the convex reformulation of Problem $\mathcal{P}$ using a convex programing method, to obtain thee $\mathbf{a}[k]$ and $\mathbf{b}[k]$}\label{Line2}
    \State{Associate IoT devices to UAVs using  \textbf{I2U procedure}}\label{Line3}
    %\State{Compute $T_{i,u}[k]$ using (\ref{Tiu})}\label{Line4}
    \State{Connect UAVs to BS using \textbf{U2B procedure} }\label{Line5}
%    \State{Compute $T_{u}[k]$ using (\ref{Tu})}\label{Line6}
    \State{$k=k+1$}
\EndWhile
%\Return{$\mathbf{a}$,$\mathbf{b}$,$\mathbf{q}$}
\end{algorithmic}
\end{algorithm}
% ==================
% # V. Simulation result #
% ==================
\section{Simulation Results}\label{Simu}

To evaluate the performance of our approach, we  consider the following scenario. We assume $K=10$, the number of UAVs $U=4$, the number of IoT devices  $I=25$, the speed to $v=15m/s$, the minimum distance between two UAVs $d_{min}=15m$, and a 3D area of $100\times 100 \times 100m^3$. Initially, the UAVs are randomly scattered in the 3D space.
% \begin{figure}[!ht]
%     \centering
%     \includegraphics[width=0.5\linewidth]{images/compJASDet.png}
%     \caption{Comparison between the proposed solution to the deterministic one}
%     \label{fig:comp}
% \end{figure}
We compare the proposed approach with a  deterministic approach similar to the one proposed in~\cite{Yang2019}.% where we set all the probabilities that are greater than $0.5$ to $1$ and $0$ otherwise. %Then, we proceed in the same way as~\cite{Yang2019} for the choice of associations by applying their approach on two steps (UAVs IoT devices association and UAVS BS associations).
The results of this comparison are shown in Fig.~ \ref{fig:comp}. As one may expect, our method gives a smaller value of AoU when compared to the deterministic approach, which in turn is better than a random feasible solution of the optimization problem. 
%Our method obtained and AoU of around $36$ while the detrministic approach obtained  
%where we have plotted the AoU for each approach and our proposed JAS performs better.

% \begin{figure}[!ht]
%     \centering
%     \includegraphics[width=0.5\linewidth]{}
%     \caption{AoU over iteration}
%     \label{fig:funcplot}
% \end{figure}
In Fig.~\ref{fig:funcplot}, we plot the objective value over iterations. As the plot shows, the value of the function decreases over the iterations until it reaches the optimum. 

Fig.~\ref{fig:energyconsumption} shows the energy consumed by each UAV against their maximum energy. We can see that none of the UAVs exceeded their energy limits. 
% \begin{figure}[!ht]
%     \centering
%     \includegraphics[width=0.5\linewidth]{images/EnergyComsuption.png}
%     \caption{Energy Consumption by each UAV}
%     \label{fig:energyconsumption}
% \end{figure}

In Fig.~\ref{UAV_assoc_k_4}, we plot the UAVs trajectory and the associations for the last time interval $k=5$. In this figure, we show the 3D positioning of the UAVs as well as the trajectory followed by each UAV from $k=1$ to $k=5$. We also represent the associations between each UAV and the IoT devices as well as the UAVs selected by the BS for transmission during the last time interval. 
We can notice that, the UAVs try to position themselves in order to communicate well with the IoT devices and the BS. We can also see that most of the IoT devices are served by the UAVs.

% \begin{figure}[!ht]
%     \centering
%     \begin{subfigure}[t]{0.3\textwidth}
%         \centering
%         \includegraphics[width=1.1\textwidth]{images/UAV_traj_assoc_k_2.png}
%         \caption{UAV trajectory and association for $k=2$ }
%         \label{UAV_assoc_k_2}
%     \end{subfigure}%
    
%     \begin{subfigure}[t]{0.3\textwidth}
%         \centering
%         \includegraphics[width=1.1\textwidth]{images/UAV_traj_assoc_k_3.png}
%         \caption{UAV trajectory and association for $k=3$}
%         \label{UAV_assoc_k_3}
%     \end{subfigure}

%     \begin{subfigure}[t]{0.3\textwidth}
%         \centering
%         \includegraphics[width=1.1\textwidth]{}
%         \caption{UAV trajectory and association for $k=4$}
%         \label{UAV_assoc_k_4}
%     \end{subfigure}
%     \caption{UAV trajectory and association}
%     \label{UAV_assoc}
% \end{figure}

% ==================
% # V. Conclusion #
% ==================

\section{Conclusion}\label{Conc}
In this paper, we consider the optimization of the expected AoU in a UAV-assisted network. Our objective is to find the optimal 3D location scheduling and the probabilities of association between the IoT devices, the UAVs, and the BS. We first formulate the problem as a non-linear programming. Then, we propose a convex reformulation of the studied optimization. We solve the convex optimization using an interior-point based algorithm. Finally, we show that the proposed approach outperforms the benchmark algorithm.%  the AoU-optimal problem for UAV-assisted networks under the required quality of service, collision avoidance and energy budget constraints. We propose a new probabilistic approach and showed  that the resulting problem can be easily tractable numerically.  Our preliminary 
\section*{Acknowledgment}
Research reported in this work was supported by the European Commission in partnership with the African Academy of Sciences (AAS) through ARISE-PP Grant (Grant N°:ARISE-PP-FA-109).
% The authors would like to thank...

% ==============
% # REFERENCES #
% ==============
\balance

\bibliographystyle{IEEEtran}
\bibliography{main}

\end{document}